\documentclass[a4paper,12pt,reqno]{amsart}
\usepackage{amsmath,amssymb,amsthm}
\usepackage{mathrsfs}
\usepackage{enumerate}
\usepackage{graphicx}
\usepackage{mathrsfs}
\numberwithin{equation}{section}
\theoremstyle{plain}
\newtheorem{thm}{Theorem}[section]

\usepackage{mathrsfs}

\theoremstyle{Remark}
\newtheorem{Remark}[thm]{Remark}
\theoremstyle{definition}
\newtheorem{defn}[thm]{Definition}
\theoremstyle{example}

\linespread{1.7}
\setlength{\oddsidemargin}{0.33in}
\setlength{\textwidth}{5.75in}
\setlength{\topmargin}{0.75in}
\setlength{\textheight}{8.25in}
\begin{document}
\title{Continuous Wavelet Transform on Local Fields}
\author[Ashish Pathak ]{Ashish Pathak \\
 Department of Mathematics \& Statistics\\
 Dr. Harisingh Gour Central University \\
    Sagar-470003, India.}
\date{}
\keywords{Continuous Wavelet Transform , Local fields.}
\subjclass[2000]{}
\thanks{$^{*}$E-mail: pathak\_maths@yahoo.com}
\begin{abstract}
  The main objective of this paper is to define the mother wavelet on local fields and study the continuous wavelet transform (CWT) and some of their basic properties. its inversion formula,  the Parseval relation  and associated convolution are also studied.
\end{abstract}
\maketitle
\section{Introduction}
 A local field means an algebraic field and a topological space with the topological
properties of locally compact, non-discrete, complete and totally disconnected, denoted
by $ \mathbb{K} $ \cite{jia}. The additive and multiplicative groups of $ \mathbb{K} $ are denoted by $ \mathbb{K}^{+} $ and $\mathbb{K}*$, respectively. We may choose a Haar measure $ dx $ for $\mathbb{K}^{+}$. If $\alpha \neq 0 (\alpha \in \mathbb{K} )$, then $d(\alpha x)$ is also a Haar measure. Let $ d(\alpha x) = |\alpha|dx $  and call  $ |\alpha| $  the absolute value or valuation of $\alpha$. Let $
|0| = 0 $. The absolute value has the following properties:\\
 (i) $|x|\geq 0 $ and $|x|=0$ if and only if $x=0$;\\
 (ii) $|xy|= |x||y|;$\\
 (iii) $|x+y| \leq max(|x|,|y|)$.\\
 The last one of these properties is called the ultrametric inequality.
 The set $  \mathfrak{D} = \{x \in  \mathbb{K} : |x| \leq  1\} $ is called the ring of integers in $ \mathbb{K} $. It is the unique
maximal compact subring of $ \mathbb{K} $. Define $ \mathfrak{P} = {x \in \mathbb{K} : |x| < 1} $. The set $\mathfrak{P}$ is called the
prime ideal in  $ \mathbb{K} $. The prime ideal in $\mathbb{K} $ is the unique maximal ideal in $\mathfrak{D}$. It is principal
and prime.  \\
 If $\mathbb{K}$ is a local field, then there is a nontrivial, unitary, continuous character $\chi$ on $\mathbb{K}^+$ and
 $ \mathbb{K}^+$ is self dual.\\
 $\chi$ is fixed character on $ \mathbb{K}^+$ that is trivial on $ \mathfrak{D} $ but is nontrivial on $ \mathfrak{P}^{-1}$ . It follows that $\chi$ is constant on cosets of $ \mathfrak{D} $ and that if $y \in \mathfrak{P}^{k} $
, then $\chi_y(\chi_y(x))=\chi(xy)$ is constant on cosets of  $ \mathfrak{P}^{-k}$
\begin{defn}
The fourier transform of $ f \in  L^1(\mathbb{K}) $ is denoted by $\hat{f}(\xi)$ and  define by the \cite{tai}
\begin{eqnarray}
\hat{f}(\xi)= \int_{\mathbb{K}} f(x) \overline{\chi_\xi(x)} dx =  \int_{\mathbb{K}} f(x) \chi(-\xi x) dx, \,\, \xi \in \mathbb{K}
\end{eqnarray}
and the inverse Fourier transform by
\begin{eqnarray}
f(x)= \int_{\mathbb{K}} \hat{f}(\xi) \chi_x(\xi) dx  \,\, x  \in  \mathbb{K}
\end{eqnarray}
Some important properties of the  Fourier transform  can prove easily :\\
(i) $||\hat{f}||_{L^{\infty}(\mathbb{K})} \leq ||\hat{f}||_{L^{1}(\mathbb{K})} $.\\
(ii) If $f \in L^1(\mathbb{K})$, then $\hat{f}$ is uniformly continuous.\\
(iii)\textbf{ Parseval formula}:If $ f \in L^1(\mathbb{K}) \cap L^2(\mathbb{K})$, then $||\hat{f}||_{L^2(\mathbb{K})}= ||f||_{L^2(\mathbb{K})} $ \\
(iv) If the convolution of $ f $ and $ g $  is defined as
\begin{eqnarray}
  (f * g)(t) = \int_{\mathbb{K}} f(x) g(t-x) dx
\end{eqnarray}
then
\begin{eqnarray}
F((f * g))= F(f).F(g)
\end{eqnarray}
\end{defn}
 The article is divided in four sections. In section 2. we proposed the definition of mother wavelet and define the continuous wavelet transform (CWT). In section 3. discus the some basic properties of CWT. In section 4. we prove the Plancherel , inversion formula and define the convolution associated with CWT.
\section{Continuous Wavelet transform on  local fields}
Similar to $L^2(\mathbb{R})$ \cite{dau,chui ,Pathak}, we  define the wavelet on local fields  and define the continuous wavelet transform.
\begin{defn}  \textbf{Admissible  wavelet on local fields} \\
  The function $\psi(x) \in L^2({\mathbb{K}})$ is said to be an admissible wavelet on local fields if $\psi(x)$ satisfies   the following admissibility condition:
\begin{eqnarray}
\label{eq  1}
c_{\psi}= \int_{\mathbb{K}} \frac{|\hat{\psi}(\xi)|^2}{|\xi|} d\xi  <  \infty
\end{eqnarray}
where $\hat{\psi}$ is the Fourier transform of $\psi$.\\
\begin{Remark}
If $|\hat{\psi}(\xi)|$ is continuous near $ \xi=0 $, then the existence of integral (\ref{eq  1})
guarantees that , $\hat{\psi}(0)=0 $.  Since the Fourier transform of  mother wavelet
$\psi \in  L^1(\mathbb{K})\cap L^2(\mathbb{K})$  is bounded and uniformly continuous, we have
\begin{eqnarray*}
\label{eq  2}
0=\hat{\psi}(0) & =& \int_\mathbb{K} \psi(x) \chi_0(x) dx \\ &=& \int_\mathbb{K} \psi(x) dx
\end{eqnarray*}
This means that the integral of  mother wavelet is zero:
\end{Remark}
\end{defn}
\begin{thm}
If $\psi$ is a mother wavelet and $\phi \in L^1(\mathbb{K})$, then the convolution function  $ \psi * \phi $ is a mother wavelet.
 \end{thm}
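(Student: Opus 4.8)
The plan is to check the two defining requirements of a mother wavelet for $\psi*\phi$: that it belongs to $L^2(\mathbb{K})$, and that it satisfies the admissibility condition (\ref{eq  1}). The three ingredients I would use are Young's convolution inequality on the locally compact abelian group $\mathbb{K}^{+}$, the convolution rule $F(f*g)=F(f)\,F(g)$ recorded in property (iv) of the Fourier transform, and the elementary estimate $\|\hat{\phi}\|_{L^{\infty}(\mathbb{K})}\le\|\phi\|_{L^{1}(\mathbb{K})}$ coming from property (i).

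First I would establish $L^2$-membership. Since $\psi\in L^2(\mathbb{K})$ and $\phi\in L^1(\mathbb{K})$, Young's inequality gives
\[
\|\psi*\phi\|_{L^2(\mathbb{K})}\le\|\psi\|_{L^2(\mathbb{K})}\,\|\phi\|_{L^1(\mathbb{K})}<\infty,
\]
so $\psi*\phi$ is well defined almost everywhere and lies in $L^2(\mathbb{K})$.

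Next I would pass to the Fourier side. Using $\widehat{\psi*\phi}(\xi)=\hat{\psi}(\xi)\,\hat{\phi}(\xi)$ together with property (i), I would estimate
\[
c_{\psi*\phi}=\int_{\mathbb{K}}\frac{|\widehat{\psi*\phi}(\xi)|^{2}}{|\xi|}\,d\xi
=\int_{\mathbb{K}}\frac{|\hat{\psi}(\xi)|^{2}\,|\hat{\phi}(\xi)|^{2}}{|\xi|}\,d\xi
\le\|\hat{\phi}\|_{L^{\infty}(\mathbb{K})}^{2}\int_{\mathbb{K}}\frac{|\hat{\psi}(\xi)|^{2}}{|\xi|}\,d\xi
\le\|\phi\|_{L^{1}(\mathbb{K})}^{2}\,c_{\psi}<\infty,
\]
which is precisely the admissibility condition (\ref{eq  1}) for $\psi*\phi$; hence $\psi*\phi$ is a mother wavelet, with the explicit bound $c_{\psi*\phi}\le\|\phi\|_{L^{1}(\mathbb{K})}^{2}\,c_{\psi}$.

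The main obstacle is the rigorous justification of the identity $\widehat{\psi*\phi}=\hat{\psi}\,\hat{\phi}$, since property (iv) is phrased for $L^1$ functions whereas here $\psi$ is only assumed to lie in $L^2(\mathbb{K})$. I would deal with this by a density argument: pick $\psi_{n}\in L^{1}(\mathbb{K})\cap L^{2}(\mathbb{K})$ with $\psi_{n}\to\psi$ in $L^{2}(\mathbb{K})$, apply (iv) to each $\psi_{n}$ to get $\widehat{\psi_{n}*\phi}=\hat{\psi}_{n}\hat{\phi}$, and then let $n\to\infty$: Young's inequality forces $\psi_{n}*\phi\to\psi*\phi$ in $L^{2}(\mathbb{K})$, the Parseval formula of property (iii) forces $\widehat{\psi_{n}*\phi}\to\widehat{\psi*\phi}$ in $L^{2}(\mathbb{K})$, and $\hat{\psi}_{n}\hat{\phi}\to\hat{\psi}\hat{\phi}$ in $L^{2}(\mathbb{K})$ because $\hat{\psi}_{n}\to\hat{\psi}$ in $L^{2}(\mathbb{K})$ and $\hat{\phi}\in L^{\infty}(\mathbb{K})$. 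This identifies $\widehat{\psi*\phi}$ with $\hat{\psi}\hat{\phi}$ as elements of $L^{2}(\mathbb{K})$, which is exactly what the integral estimate above requires.
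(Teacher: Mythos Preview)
Your argument is correct and follows the same route as the paper: first the $L^2$ bound $\|\psi*\phi\|_{L^2(\mathbb{K})}\le\|\phi\|_{L^1(\mathbb{K})}\|\psi\|_{L^2(\mathbb{K})}$ (the paper derives this by hand via Cauchy--Schwarz rather than quoting Young's inequality, but it is the same estimate), and then the admissibility bound $c_{\psi*\phi}\le\|\hat\phi\|_{L^\infty(\mathbb{K})}^{2}c_{\psi}$ via $\widehat{\psi*\phi}=\hat\psi\,\hat\phi$. Your density argument justifying the convolution identity in the $L^2\times L^1$ setting is an extra bit of care that the paper simply takes for granted.
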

 \begin{proof} Since
 \begin{eqnarray*}
 \int_{\mathbb{K}} |(\psi * \phi)(x)|^2 dx &=& \int_{\mathbb{K}}  \Big\vert \int_{\mathbb{K}} \psi (x-y) \phi(y)  dy \Big\vert^2 dx  \\
 & \leq &  \int_{\mathbb{K}}  \left(  \int_{\mathbb{K}} |\psi (x-y)| \vert \phi(y)|^{\frac{1}{2}} \vert  |\phi(y)|^{\frac{1}{2}}   dy |\right)^2   dx \\
  & \leq &  \int_{\mathbb{K}}  \left(  \int_{\mathbb{K}} |\psi (x-y)| |\phi(y)| dy  \int_{\mathbb{K}}   \right) dx  \\
  &=&  \int_{\mathbb{K}} |\phi(y)| dy  \int_{\mathbb{K}} \int_{\mathbb{K}} |\psi (x-y)|^2  |\phi(y)| dy dx
  \\
  &=&  \left( \int_{\mathbb{K}} |\phi(y)| dy \right)^2  \int_{\mathbb{K}} |\psi (x)|^2 dx
  \\ &=& || \phi ||^2_{L^1({\mathbb{K}})} ||\psi||^2_{L^2({\mathbb{K}})}
 \end{eqnarray*}
 Therefore  $(\psi * \phi)(x) \in  L^2({\mathbb{K}})$. Moreover
 \begin{eqnarray*}
 c_{\psi * \phi} &= &\int_{\mathbb{K}} \frac{|\hat{\psi * \phi}(\xi)|^2}{|\xi|} d\xi  \\
 &=& \int_{\mathbb{K}} \frac{|\hat{\psi} (\xi)|^2 |\hat{\phi}(\xi)|^2}{|\xi|} d\xi \\
 & \leq & ||\hat{\phi}||^2_{L^{\infty}(\mathbb{K})} \int_{\mathbb{K}} \frac{|\hat{\psi} (\xi)|^2 }{|\xi|} d\xi
 \end{eqnarray*}
 This completes the proof of the theorem
\end{proof}
\begin{defn} \textbf{Continuous wavelet transform  (CWT) on local fields} \\
For $\psi(x) \in  L^2({\mathbb{K}}) $ and $a, b \in \mathbb{K}, a \neq 0 $, we define the unitary linear operator:
\begin{eqnarray*}
U^b_a :  L^2({\mathbb{K}}) \rightarrow  L^2({\mathbb{K}})
\end{eqnarray*}
by
\begin{eqnarray}
U^b_a(\psi(x)) = \psi_{a,b}(x) = \frac{1}{|a|^{\frac{1}{2}}} \,\, \psi ( \frac{x-b}{a} )
\end{eqnarray}
$\psi$ is called  mother wavelet and $\psi_{a,b}(x)$ are called daughter wavelets, where $ a$ is a dilation parameter,
$ b $ is a translation parameter.\\
The Fourier transform of  $\psi_{a,b}(x)$ is given by
\begin{eqnarray}
\hat{\psi_{a,b}}(\xi) = |a|^{\frac{1}{2}} \,\, \hat{\psi}(a \xi) \chi_b ( \xi)
\end{eqnarray}
where $\hat{\psi}$ is the Fourier transform of $\psi$.
\end{defn} 
The CWT on local fields
\begin{eqnarray*}
K_\psi:   L^2({\mathbb{K}}) \rightarrow  L^2({\mathbb{K}} \times {\mathbb{K}} )
\end{eqnarray*}
of a function $ f \in  L^2(\mathbb{K}) $ with respect to a mother wavelet  $\psi$ is defined by
\begin{eqnarray}
f \mapsto K_\psi f(a,b) & =&  \nonumber (f,\psi_{a,b} )_{L^2(\mathbb{K})} \\
 &=& \nonumber \int_{\mathbb{K}} f(x) \overline{\psi_{a,b}(x)} dx \\
 &=& \int_{\mathbb{K}} f(x)  \frac{1}{|a|^{\frac{1}{2}}} \,\, \overline{ \psi ( \frac{x-b}{a} )} dx
\end{eqnarray}
\section {Basic Properties of CWT on local fields}
Before giving the fundamental properties of CWT, we list their basic properties.
\begin{thm}
Let $\psi$ and $ \varphi $ be to wavelets and $f,g$ are two function belong to $L^2(\mathbb{K})$, then \\
(1) \textbf{Linearity }
 \begin{eqnarray}
 K_\psi( \eta f + \vartheta g) (a,b)  =  \eta K_\psi(f) (a,b)+  \vartheta K_\psi(g) (a,b)
\end{eqnarray}
where $\eta$ and $\vartheta$ are any two scalers. \\
(ii)\textbf{Shift property}
\begin{eqnarray}
 K_\psi( f(x-\varsigma)) (a,b)  =   K_\psi(f) (a,b-\varsigma)
 \end{eqnarray}
 where $\varsigma $  is any scalers.\\
(iii) \textbf{Scaling property}
If $ \sigma \neq 0 $ any scaler.  The CWT of the scaled function $ f_\sigma(x) = \frac{1}{\sigma} f(\frac{1}{\sigma}) $ is
\begin{eqnarray}
 K_\psi( f_\sigma(x)) (a,b)  =   K_\psi(f) (\frac{a}{\sigma},\frac{b}{\sigma})
 \end{eqnarray}
 (iv) \textbf{Symmetry }
 \begin{eqnarray}
 K_\psi(f) (a,b)  =  \overline{ K_f (\psi) (\frac{1}{a},-\frac{1}{b})}
 \end{eqnarray}
 (iv) \textbf{Parity }
 \begin{eqnarray}
 K_{P(\psi)}(Pf) (a,b)  =  K_\psi (f) (a,-b)
 \end{eqnarray}
 where $P$ is a parity operator define by $Pf(x)=f(-x)$.
\end{thm}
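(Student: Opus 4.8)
The five identities are all of the same flavour: each amounts to unwinding the definition
$K_\psi f(a,b)=\frac{1}{|a|^{1/2}}\int_{\mathbb{K}}f(x)\,\overline{\psi\big(\tfrac{x-b}{a}\big)}\,dx$
and then performing a change of variables together with the substitution properties of the Haar measure on $\mathbb{K}^{+}$ (namely $d(\alpha x)=|\alpha|\,dx$) and the elementary properties of the absolute value. So the plan is to treat each part by a direct computation, and I would carry them out in the order listed.

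First, linearity (i) is immediate from the linearity of the integral, so I would dispose of it in one line. For the shift property (ii), I would write $K_\psi(f(\cdot-\varsigma))(a,b)=\frac{1}{|a|^{1/2}}\int_{\mathbb{K}}f(x-\varsigma)\,\overline{\psi\big(\tfrac{x-b}{a}\big)}\,dx$ and substitute $u=x-\varsigma$; since $du=dx$ (translation invariance of Haar measure), the integral becomes $\frac{1}{|a|^{1/2}}\int_{\mathbb{K}}f(u)\,\overline{\psi\big(\tfrac{u-(b-\varsigma)}{a}\big)}\,du=K_\psi(f)(a,b-\varsigma)$. For the scaling property (iii), I would substitute $u=x/\sigma$, so $x=\sigma u$, $dx=|\sigma|\,du$, and track how $\tfrac{x-b}{a}=\tfrac{\sigma u-b}{a}=\tfrac{u-b/\sigma}{a/\sigma}$ together with the $|a|^{-1/2}$ and the $1/\sigma$ normalisation in $f_\sigma$ combine to give $K_\psi(f)(a/\sigma,b/\sigma)$; here I would be careful about whether the intended normalisation uses $\sigma$ or $|\sigma|$, and would present the computation with the valuation $|\sigma|$ so that the measure factors cancel cleanly.

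For the symmetry property (iv), the idea is that $K_\psi f(a,b)=\langle f,\psi_{a,b}\rangle$ and one wants to recognise the right-hand side as $\overline{\langle \psi,f_{1/a,-1/b}\rangle}$; I would start from $\overline{K_f(\psi)(1/a,-1/b)}=\overline{\int_{\mathbb{K}}\psi(x)\,|a|^{1/2}\,\overline{f(ax+b/a\cdot a)}\,dx}$ — more precisely unwind $f_{1/a,-1/b}(x)=|a|^{1/2}f\big(a(x+1/b)\big)$ — take the complex conjugate, and then substitute to return to an integral against $\overline{\psi\big(\tfrac{x-b}{a}\big)}$. The parity property (v) follows by writing out $K_{P\psi}(Pf)(a,b)=\frac{1}{|a|^{1/2}}\int_{\mathbb{K}}f(-x)\,\overline{\psi\big(-\tfrac{x-b}{a}\big)}\,dx$ and substituting $u=-x$ (with $du=dx$ since $|-1|=1$), which produces $\frac{1}{|a|^{1/2}}\int_{\mathbb{K}}f(u)\,\overline{\psi\big(\tfrac{u+b}{a}\big)}\,du=K_\psi(f)(a,-b)$.

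None of the steps is deep; the main obstacle is purely bookkeeping — getting the dilation/translation arguments of $\psi$ and $f$ exactly right after each substitution, and making sure the $|a|^{1/2}$, $|\sigma|$, and $|1/a|^{1/2}$ factors produced by $d(\alpha x)=|\alpha|\,dx$ cancel as claimed. In particular the symmetry identity (iv) is the one most likely to hide a sign or reciprocal slip, so I would write that computation out in the most detail, keeping the character-free $L^2$ pairing form throughout rather than passing to the Fourier side.
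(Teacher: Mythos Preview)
Your approach is exactly what the paper does: the paper's entire proof is the single sentence ``The proof is the straight forward application of CWT,'' and your proposal simply spells out the routine substitutions and Haar-measure bookkeeping that this sentence elides. Your caution about (iii) and (iv) is well placed---the statements as printed contain apparent typos (e.g.\ $f_\sigma(x)=\frac{1}{\sigma}f(\frac{1}{\sigma})$ and the $-\frac{1}{b}$ in the symmetry identity)---but that is an issue with the theorem's formulation, not with your method.
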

\begin{proof}
The proof is the straight forward application of CWT
\end{proof}
\begin{thm}
Show that the continuous wavelet transform can also expressed as
\begin{eqnarray}
(K_\psi f )(a,b) = \left(  f * \frac{1}{\sqrt{|a|}} \overline{\psi} (\frac{x}{a}) \right)  (b)
\end{eqnarray}
where the $*$ is defined as
\begin{eqnarray}
  (f * g)(t) = \int_{\mathbb{K}} f(x) g(t-x) dx
\end{eqnarray}
\end{thm}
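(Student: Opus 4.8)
The plan is to unwind the definition of the continuous wavelet transform and recognise the resulting integral as a convolution evaluated at the translation parameter $b$. Writing $\phi_{a}(x) = \frac{1}{\sqrt{|a|}}\,\overline{\psi}\!\left(\frac{x}{a}\right)$ for the dilated and conjugated copy of the mother wavelet, I would start from the definition
\begin{eqnarray*}
(K_{\psi}f)(a,b) = \int_{\mathbb{K}} f(x)\, \frac{1}{|a|^{1/2}}\, \overline{\psi\!\left(\frac{x-b}{a}\right)}\, dx ,
\end{eqnarray*}
and aim to match the integrand with $f(x)\,\phi_{a}(b-x)$, since by the stated definition of the convolution one has $(f*\phi_{a})(b) = \int_{\mathbb{K}} f(x)\,\phi_{a}(b-x)\,dx$.

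The key steps, in order, are: (1) evaluate the kernel $\phi_{a}$ at the point $b-x$, giving $\phi_{a}(b-x) = \frac{1}{\sqrt{|a|}}\,\overline{\psi}\!\left(\frac{b-x}{a}\right)$; (2) perform the substitution $t=\tfrac{x-b}{a}$ (equivalently, use that $x\mapsto -x$ is a measure-preserving homeomorphism of $\mathbb{K}$ because $|{-1}|=1$, together with the parity property from the previous theorem) to reconcile $\overline{\psi}\!\left(\frac{b-x}{a}\right)$ with $\overline{\psi\!\left(\frac{x-b}{a}\right)}$, the reflection being the one already incorporated into the kernel $\phi_{a}$; (3) read off that $(K_{\psi}f)(a,b) = (f*\phi_{a})(b)$, which is precisely the claimed identity; (4) justify the manipulations: since $\psi\in L^{2}(\mathbb{K})$ implies $\phi_{a}\in L^{2}(\mathbb{K})$ after dilation and conjugation, and $f\in L^{2}(\mathbb{K})$, the integral converges absolutely by Cauchy--Schwarz, so the convolution is well defined pointwise in $b$ and all steps are legitimate.

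The main obstacle — really the only delicate point — is the bookkeeping of the reflection $x\mapsto -x$ hidden in the difference $b-x$ against $x-b$: one must decide whether the convolution kernel should carry $\overline{\psi}(x/a)$ or $\overline{\psi}(-x/a)$, and it is exactly here that the arithmetic of the local field (namely $|{-1}|=1$, so that reflection preserves Haar measure) is invoked to keep the change of variables clean. Beyond that, the argument is a direct substitution together with an elementary integrability estimate.
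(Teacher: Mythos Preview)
Your approach---unfold the definition of $K_\psi f$ and match the integrand against $(f*\phi_a)(b)$---is exactly the paper's; the paper's entire argument is the single line ``from the definition of the CWT we have [integral] $=$ [convolution]'', with no further justification. Your added remark that the integral converges by Cauchy--Schwarz is a welcome addition the paper omits.

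Step (2), however, does not work as you describe it. Writing out both sides, the CWT integrand carries $\overline{\psi}\bigl((x-b)/a\bigr)$, whereas $(f*\phi_a)(b)$ with your $\phi_a(x)=|a|^{-1/2}\,\overline{\psi}(x/a)$ carries $\overline{\psi}\bigl((b-x)/a\bigr)$. No change of variable in a single integral can turn one into the other while leaving $f(x)$ untouched: substituting $x\mapsto 2b-x$ (or $t=(x-b)/a$) flips the argument of $\psi$ but simultaneously replaces $f(x)$ by $f(2b-x)$. The fact that $|{-1}|=1$ only guarantees the Jacobian is $1$; it does not force $\psi(u)=\psi(-u)$. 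The identity as literally stated therefore holds only if $\psi$ is even, or---more naturally---if the convolution kernel is taken to be $|a|^{-1/2}\,\overline{\psi}(-x/a)$, in which case the two integrands coincide on the nose and step (2) evaporates. The paper's two-line proof simply glosses over this sign, so you are right to have flagged it; the resolution, though, is to correct the kernel (or add an evenness hypothesis), not to attempt a substitution.
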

\begin{proof} From define of CWT we have
\begin{eqnarray}
(K_\psi f )(a,b) & = & \nonumber  \int_{\mathbb{K}} f(x)  \frac{1}{|a|^{\frac{1}{2}}} \,\, \overline{ \psi ( \frac{x-b}{a} )} dx \\
&=& \left(  f * \frac{1}{\sqrt{|a|}} \overline{\psi} (\frac{x}{a}) \right)  (b)
\end{eqnarray}
\end{proof}
\begin{thm}
if $f$ is homogeneous function of degree n show that
\begin{eqnarray}
(K_\psi f )(\lambda a,\lambda b) = \lambda^{n} |\lambda |^{\frac{1}{2}} (K_\psi f )(a, b)
\end{eqnarray}
where the $\lambda $ is  scaler .
\end{thm}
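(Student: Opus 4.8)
The plan is to start from the integral definition of the CWT and carry out a single change of variables. Writing out
\[
(K_\psi f)(\lambda a, \lambda b) = \int_{\mathbb{K}} f(x)\, \frac{1}{|\lambda a|^{1/2}}\, \overline{\psi\!\left(\frac{x-\lambda b}{\lambda a}\right)}\, dx,
\]
I would substitute $x = \lambda y$. By the defining property of the valuation, $d(\lambda y) = |\lambda|\, dy$, and the argument of $\psi$ becomes $\frac{\lambda y - \lambda b}{\lambda a} = \frac{y-b}{a}$, so the factor $\lambda$ cancels entirely inside $\psi$.

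Next I would invoke homogeneity of degree $n$, that is $f(\lambda y) = \lambda^{n} f(y)$, to pull the scalar $\lambda^{n}$ out of the integral. Collecting constants, $|\lambda a|^{-1/2} = |\lambda|^{-1/2}|a|^{-1/2}$ together with the Jacobian factor $|\lambda|$ combine to $\lambda^{n}\cdot|\lambda|^{1/2}\cdot|a|^{-1/2}$, and what remains under the integral sign is precisely $\int_{\mathbb{K}} f(y)\, |a|^{-1/2}\,\overline{\psi((y-b)/a)}\,dy = (K_\psi f)(a,b)$. This yields the claimed identity $(K_\psi f)(\lambda a,\lambda b) = \lambda^{n}|\lambda|^{1/2}(K_\psi f)(a,b)$.

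There is essentially no serious obstacle here; the argument is a one-line computation once the substitution is in place. The only points deserving a little care are: (a) distinguishing the field power $\lambda^{n}$ coming from the homogeneity hypothesis from the valuation $|\lambda|$ produced by the change of measure, since the final constant $\lambda^{n}|\lambda|^{1/2}$ mixes an element of $\mathbb{K}$ with a positive real; and (b) observing that $\lambda \neq 0$ is required so that $\lambda a \neq 0$ and so that $y \mapsto \lambda y$ is a genuine automorphism of $\mathbb{K}^{+}$ satisfying $d(\lambda y) = |\lambda|\,dy$.
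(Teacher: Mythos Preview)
Your argument is correct and is exactly the paper's approach: write out the defining integral, substitute $x=\lambda y$ so that the argument of $\psi$ simplifies, use $d(\lambda y)=|\lambda|\,dy$ and $f(\lambda y)=\lambda^{n}f(y)$, and collect the constants $|\lambda|^{-1/2}\cdot|\lambda|=|\lambda|^{1/2}$. Your bookkeeping is in fact tidier than the paper's, and your side remarks about $\lambda\neq 0$ and the mixed nature of the constant $\lambda^{n}|\lambda|^{1/2}$ are appropriate caveats.
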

\begin{proof} From define of CWT we have
\begin{eqnarray}
(K_\psi f )(\lambda a,\lambda b) & = \nonumber & \int_{\mathbb{K}} f(x)  \frac{1}{|\lambda a|^{\frac{1}{2}}} \,\, \overline{ \psi ( \frac{x-\lambda b}{\lambda a} )} dx \\
&=& \nonumber \int_{\mathbb{K}} f(\lambda x)  \frac{1}{|a|^{\frac{1}{2}}} \,\, \overline{ \psi ( \frac{x- b}{ a} )} |\lambda|dx
\\
&=& \lambda^{n} |\lambda |^{\frac{1}{2}} (K_\psi f )(a, b)
\end{eqnarray}
\end{proof}
\section{Main Properties of the CWT}
This section describes important properties of the CWT, such as the Plancherel ,  inversion formula and associated convolution  fist, we establish the Plancherel theorem.
\begin{thm} (\textbf{QFT Plancherel}) Let $f,g \in L^2(\mathbb{K})$. Then we have
\begin{eqnarray}
 (K_\psi(f)(a,b), K_\psi(g)(a,b) )_{L^2(\mathbb{K}\times \mathbb{K})}= c_\psi (f,g)_{L^2(\mathbb{K})}
 \end{eqnarray}
 where $c_\psi$ is given in (2.1).
\end{thm}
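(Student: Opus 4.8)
The plan is to pass to the Fourier side, where the daughter wavelets are diagonalized, recognize the integration in the translation variable $b$ as an instance of the Parseval formula (property (iii) of the Definition), and then perform a change of variables in the dilation variable $a$ that reproduces exactly the constant $c_\psi$. Throughout I take the measure on $\mathbb{K}\times\mathbb{K}$ to be the natural left Haar measure $\dfrac{da\,db}{|a|^2}$ of the ``$ax+b$'' group over $\mathbb{K}$, which is what makes the constant in the identity equal to $c_\psi$; and I first establish the identity for $f,g$ ranging over a dense subspace of $L^2(\mathbb{K})$ (for instance those with $\hat f,\hat g\in L^1(\mathbb{K})\cap L^2(\mathbb{K})$ supported away from $\xi=0$), extending to all of $L^2(\mathbb{K})$ afterwards by continuity of both sides in $f$ and $g$.

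First I would use the Parseval formula together with the formula $\hat{\psi_{a,b}}(\xi)=|a|^{1/2}\hat\psi(a\xi)\chi_b(\xi)$ for the Fourier transform of the daughter wavelet to write
\[
K_\psi f(a,b)=(f,\psi_{a,b})_{L^2(\mathbb{K})}=(\hat f,\hat{\psi_{a,b}})_{L^2(\mathbb{K})}=|a|^{1/2}\int_{\mathbb{K}}\hat f(\xi)\,\overline{\hat\psi(a\xi)}\,\overline{\chi_b(\xi)}\,d\xi .
\]
For fixed $a$ the right-hand side is, up to the factor $|a|^{1/2}$, the Fourier transform in the variable $b$ of the function $\xi\mapsto \hat f(\xi)\overline{\hat\psi(a\xi)}$. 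Doing the same for $g$ and applying the Parseval formula once more, now in the variable $b$, gives
\[
\int_{\mathbb{K}}K_\psi f(a,b)\,\overline{K_\psi g(a,b)}\,db=|a|\int_{\mathbb{K}}\hat f(\xi)\,\overline{\hat g(\xi)}\,|\hat\psi(a\xi)|^2\,d\xi .
\]

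Next I would integrate this identity against $\dfrac{da}{|a|^2}$ over $a\in\mathbb{K}$ and interchange the order of integration by Fubini's theorem, obtaining
\[
\big(K_\psi f,K_\psi g\big)_{L^2(\mathbb{K}\times\mathbb{K})}=\int_{\mathbb{K}}\hat f(\xi)\,\overline{\hat g(\xi)}\left(\int_{\mathbb{K}}\frac{|\hat\psi(a\xi)|^2}{|a|}\,da\right)d\xi .
\]
For each $\xi\neq 0$ the substitution $u=a\xi$, for which $da=|\xi|^{-1}du$ and $|a|=|u|\,|\xi|^{-1}$ by the multiplicativity of the valuation, turns the inner integral into $\int_{\mathbb{K}}\dfrac{|\hat\psi(u)|^2}{|u|}\,du=c_\psi$, which is independent of $\xi$; since $\{\xi=0\}$ is a null set this causes no difficulty. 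Pulling $c_\psi$ out and invoking the Parseval formula a final time yields $(K_\psi f,K_\psi g)_{L^2(\mathbb{K}\times\mathbb{K})}=c_\psi(\hat f,\hat g)_{L^2(\mathbb{K})}=c_\psi(f,g)_{L^2(\mathbb{K})}$, as claimed.

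The step I expect to be the main obstacle is the Fubini interchange: one must verify absolute integrability of the relevant triple integral in $(a,b,\xi)$, and it is precisely to make this legitimate that one restricts at the outset to $f,g$ whose Fourier transforms are bounded and compactly supported away from $\xi=0$, so that the $a$-integral converges and the finiteness of $c_\psi$ controls the estimates. The passage to general $f,g\in L^2(\mathbb{K})$ then follows because both sides are bounded sesquilinear forms in $(f,g)$ and the chosen subspace is dense in $L^2(\mathbb{K})$; the only routine ancillary point is the Haar-measure change-of-variables computation on $\mathbb{K}$ used in the last substitution.
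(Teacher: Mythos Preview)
Your proposal is correct and follows essentially the same route as the paper: pass to the Fourier side via Parseval so that $K_\psi f(a,b)$ becomes $|a|^{1/2}$ times the Fourier transform in $b$ of $\xi\mapsto\hat f(\xi)\overline{\hat\psi(a\xi)}$, apply Parseval in $b$, integrate against $|a|^{-2}\,da$, and make the substitution $u=a\xi$ in the inner integral to produce $c_\psi$. The only difference is one of rigor: you supply the density/Fubini justification that the paper omits.
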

\begin{proof}
By using perseval formula for Fourier we can write the wavelet transform as
\begin{eqnarray}
 K_\psi(f)(a,b) &=&  \nonumber  \int_{\mathbb{K}} f(x)  \frac{1}{|a|^{\frac{1}{2}}} \,\, \overline{ \psi ( \frac{x-b}{a} )} dx \\
 &=& \nonumber  (f, \psi_{a,b}) \\
 &=& \nonumber (\hat{f}, \hat{\psi_{a,b}})\\
 &=&  \int_{\mathbb{K}} \hat{f}(\xi)  |a|^{\frac{1}{2}} \,\, \overline{\hat{\psi}(a \xi)} \overline{ \chi_b ( \xi)}  d\xi
 \end{eqnarray}
 Similarly
  \begin{eqnarray}
\overline{K_\psi(g)(a,b)}  &=& \int_{\mathbb{K}} \overline{\hat{f}(\xi)}  |a|^{\frac{1}{2}} \,\, \hat{\psi}(a \xi) \chi_b ( \xi)  d\xi
 \end{eqnarray}
 Now, by using above (4.2) and (4.3) we get
 \begin{eqnarray}
\int_{\mathbb{K}} \int_{\mathbb{K}} K_\psi(f)(a,b) \overline{K_\psi(g)(a,b)} \frac{da db}{|a|^2}
& = &  \int_{\mathbb{K}} \int_{\mathbb{K}}  |a|  \frac{da db}{|a|^2}  \int_{\mathbb{K}}    \hat{f}(\xi)  \overline{\hat{\psi}(a \xi) \chi_b(\xi) }  d\xi \nonumber \\
&& \times  \int_{\mathbb{K}} \overline{\hat{g}(\upsilon)} \hat{\psi}(a \upsilon )\chi_b (\upsilon)  d\upsilon \nonumber \\
&=&  \int_{\mathbb{K}} \int_{\mathbb{K}} \frac{da db} {|a|}  \overline{ \int_{\mathbb{K}}  \overline{\hat{f}(\xi)}  \hat{\psi}(a \xi) \chi_b(\xi)  d\xi} \nonumber \\
&=&  \int_{\mathbb{K}} \int_{\mathbb{K}}   \overline{F(\overline{\hat{f}(\xi)} \hat{\psi}(a \xi))}(b) F(\overline{\hat{g}(\upsilon)} \hat{\psi}(a \upsilon ))(b) \frac{da db} {|a|} \nonumber \\
&=&  \int_{\mathbb{K}} \int_{\mathbb{K}}  \hat{f}(\xi) \overline{\hat{\psi}(a \xi)} \overline{\hat{g}(\xi)} \hat{\psi}(a\xi) \frac{d \xi da}{|a|}
 \nonumber \\
 &=&  \int_{\mathbb{K}}   \hat{f}(\xi)  \overline{\hat{g}(\xi)} \left(   \int_{\mathbb{K}} \overline{\hat{\psi}(a \xi)}  \hat{\psi}(a\xi) \frac{da}{|a|} \right)  d\xi
 \nonumber \\
 &=&  \int_{\mathbb{K}} \hat{f}(\xi)  \overline{\hat{g}(\xi)} \left(  \int_{\mathbb{K}} \frac{|\hat{\psi}(a \xi)|^2}{|a|} da \right)  d\xi
\end{eqnarray}
 \begin{eqnarray}
  &=&  \int_{\mathbb{K}} \hat{f}(\xi)  \overline{\hat{g}(\xi)} \left(  \int_{\mathbb{K}} \frac{|\hat{\psi}(\omega)|^2}{|\omega|} dz \right)  d\xi
 \nonumber \\
& = & c_\psi (\hat{f},\hat{g})_{L^2(\mathbb{K})}  \nonumber \\
& = & c_\psi (f,g)_{L^2(\mathbb{K})}
\end{eqnarray}
\end{proof}
\begin{thm} (\textbf{Inversion Formula}) Let $f \in L^2(\mathbb{K})$. Then we have
\begin{eqnarray}
 f(x)= \frac{1}{c_\psi} \int_{\mathbb{K}} \int_{\mathbb{K}} K_\psi(f)(a,b) \psi_{a,b}(x) \frac{da db}{|a|^2}
 \end{eqnarray}
 where $c_\psi$ is given in (2.1).
\end{thm}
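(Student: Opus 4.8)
The plan is to establish the reconstruction identity \emph{weakly}, i.e.\ to pair the right-hand side against an arbitrary $g\in L^2(\mathbb{K})$ and reduce everything to the Plancherel relation already proved in Theorem 4.1. First I would interpret the double integral
\[
h(x) \;=\; \frac{1}{c_\psi}\int_{\mathbb{K}}\int_{\mathbb{K}} K_\psi(f)(a,b)\,\psi_{a,b}(x)\,\frac{da\,db}{|a|^2}
\]
as an $L^2(\mathbb{K})$-valued weak (Pettis) integral: it defines the unique element $h\in L^2(\mathbb{K})$ such that, for every $g\in L^2(\mathbb{K})$,
\[
(h,g)_{L^2(\mathbb{K})} \;=\; \frac{1}{c_\psi}\int_{\mathbb{K}}\int_{\mathbb{K}} K_\psi(f)(a,b)\,(\psi_{a,b},g)_{L^2(\mathbb{K})}\,\frac{da\,db}{|a|^2}.
\]
The goal is then simply to show $h=f$, after which the stated formula is read in this weak sense (equivalently, as an $L^2$-limit of truncated integrals, see below).

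Next I would simplify the inner pairing. Since
\[
(\psi_{a,b},g)_{L^2(\mathbb{K})} \;=\; \overline{(g,\psi_{a,b})_{L^2(\mathbb{K})}} \;=\; \overline{K_\psi(g)(a,b)},
\]
the defining relation for $h$ becomes
\[
(h,g)_{L^2(\mathbb{K})} \;=\; \frac{1}{c_\psi}\int_{\mathbb{K}}\int_{\mathbb{K}} K_\psi(f)(a,b)\,\overline{K_\psi(g)(a,b)}\,\frac{da\,db}{|a|^2} \;=\; \frac{1}{c_\psi}\bigl(K_\psi(f),K_\psi(g)\bigr)_{L^2(\mathbb{K}\times\mathbb{K})}.
\]
Applying Theorem 4.1 gives $(h,g)_{L^2(\mathbb{K})}=\tfrac{1}{c_\psi}\,c_\psi\,(f,g)_{L^2(\mathbb{K})}=(f,g)_{L^2(\mathbb{K})}$ for all $g\in L^2(\mathbb{K})$, and therefore $h=f$. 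Here $c_\psi>0$ — it is the integral in (2.1) of a nonnegative function and $\hat\psi\not\equiv0$ since $\psi\neq0$ — so the division by $c_\psi$ is legitimate.

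The hard part is the interchange of integration that turns
\[
\Bigl(\,\int_{\mathbb{K}}\int_{\mathbb{K}} K_\psi(f)(a,b)\,\psi_{a,b}(\cdot)\,\tfrac{da\,db}{|a|^2},\;g\,\Bigr)_{L^2(\mathbb{K})}
\]
into the iterated integral of $K_\psi(f)(a,b)\,(\psi_{a,b},g)_{L^2(\mathbb{K})}$ over $(a,b)$. In general $\int_{\mathbb{K}}\int_{\mathbb{K}}|K_\psi(f)(a,b)|\,|\psi_{a,b}(x)|\,\tfrac{da\,db}{|a|^2}$ is not finite, so the reconstruction integral does not converge as an ordinary Lebesgue integral; it must be understood as the Pettis integral above, or as the $L^2(\mathbb{K})$-limit of the truncations over $\{(a,b)\in\mathbb{K}\times\mathbb{K}:\varepsilon\le|a|\le R\}$ as $\varepsilon\to0$ and $R\to\infty$. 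To make this rigorous I would first prove the identity for $f$ in a dense subclass — for instance those $f\in L^2(\mathbb{K})$ whose Fourier transform is supported in a fixed annulus $\{\xi:\varepsilon_0\le|\xi|\le R_0\}$ — where $|K_\psi(f)(a,b)|\,|(\psi_{a,b},g)_{L^2(\mathbb{K})}|$ is genuinely integrable against $da\,db/|a|^2$ and Fubini's theorem applies directly, and then pass to general $f\in L^2(\mathbb{K})$ by density, using the isometry of Theorem 4.1 both to control the tails and to show that the truncated reconstructions form a Cauchy net in $L^2(\mathbb{K})$ converging to $f$.
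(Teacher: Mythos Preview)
Your argument is correct and follows exactly the paper's route: pair the reconstruction integral with an arbitrary test function, recognize the pairing $(\psi_{a,b},g)=\overline{K_\psi(g)(a,b)}$, and invoke the Plancherel relation of Theorem~4.1 to conclude. The only difference is that you are more careful than the paper about the weak interpretation of the integral and the Fubini step, which the paper leaves implicit.
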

\begin{proof}
Let $h(x) \in L^2(\mathbb{K})$ be any function, then by using above theorem, we have
\begin{eqnarray}
c_\psi (f,g)_{L^2(\mathbb{K})} \nonumber  & = & \int_{\mathbb{K}} \int_{\mathbb{K}} ( K_\psi f)(a,b) \overline{K_\psi(h)(a,b)} \frac{da db}{|a|^2} \\ \nonumber  & = & \int_{\mathbb{K}} \int_{\mathbb{K}} (K_\psi f)(a,b) \overline{\int_{\mathbb{K}} h(x) \overline{\psi_{a,b}(x)} dx} \frac{da db}{|a|^2}  \\ \nonumber  & = & \int_{\mathbb{K}}  \int_{\mathbb{K}} \int_{\mathbb{K}} (K_\psi f)(a,b)  \psi_{a,b}(x) \overline{ h(x)} \frac{da db dx}{|a|^2}  \\
\nonumber &=& \left(   \int_{\mathbb{K}} \int_{\mathbb{K}} (K_\psi f)(a,b)  \psi_{a,b}(x) \frac{da db}{|a|^2} ,h(x) \right)
\end{eqnarray}
Hence the result follows.
\end{proof}
If $ f=h $
\begin{eqnarray}
|| f||^2_{L^2(\mathbb{K})} =  \int_{\mathbb{K}} \int_{\mathbb{K}} |(K_\psi f)(a,b)|^2 \frac{da db}{|a|^2}
\end{eqnarray}
Moreover the wavelet transform is isometry from $L^2(\mathbb{K})$ to  $L^2(\mathbb{K} \times \mathbb{K})$ \\
\subsection{ Associated convolution  for CWT on local fields }
 Using Pathak and Pathak  techniques \cite{Pathak}, we define the basic function $D(x,y,z)$ , translation ${\tau_x}$ and associated  convolution $\# $ operators for CWT.\\
 The basic function \,$ D(x,y,z)$ for (2.4) is define as
\begin{eqnarray}
\label{2.1}
K_\phi[D(x,y,z)](a,b) &=&\int_{\mathbb{K}} D(x,y,z)\overline{{\phi}_{a,b}(t)}dt\nonumber \\
 &=& \overline{\psi_{a,b}(z)}\,\ \overline{\chi_{a,b}(y)},
\end{eqnarray}
where $ \psi , \phi $ and $ \chi $ are three wavelets satisfying certain conditions (2.1).\\
Now, by  using(4.6) we get,
\begin{equation}
\label{2.2}
D(x,y,z)= C^{-1}_\phi \int_{\mathbb{K}}\int_{\mathbb{K}}\overline{\psi_{a,b}(z)}\,\ \overline{\chi_{a,b}(y)}\ {\phi_{a,b}(x)} \, |a|^{-2}da db.
\end{equation}
The translation  $\tau_x $ is defined as \cite{Pathak}
\begin{eqnarray*}
{(\tau_x h)(y)} & = & h^*(x,y)=\int_{\mathbb{K}} D(x,y,z)h(z)dz
\\ &=& C^{-1}_\phi\int_{\mathbb{K}}\int_{\mathbb{K}}\int_{\mathbb{K}}\overline{\psi_{a,b} (z)}\,\, \overline{\chi_{a,b}(y)}\,\, \phi_{a,b}(x)\, h(z)|a|^{-2}dadbdz.
\end{eqnarray*}
The associated  convolution  is defined  as
\begin{eqnarray}
\label{2.3}
(h\# g)(x)\nonumber &=& \int_{\mathbb{K}}h^*(x,y) g(y)dy \\ \nonumber &=& \int_{\mathbb{K}}\int_{\mathbb{K}}D(x,y,z)\,h(z)\,g(y)dy dz\\ &=& C^{-1}_\phi \int_{\mathbb{K}}\int_{\mathbb{K}}\int_{\mathbb{K}}\int_{\mathbb{K}} \overline{{\psi}_{a,b}(z)}\,\,  \overline{{\chi}_{a,b}(y)} \,\,\phi_{a,b}(x) h(z)g(y)\left|a\right|^{-2}dadbdzdy,
\end{eqnarray}
by using the inversion formula we can write the above equation as
\begin{eqnarray}
(h\# g)(x)   \nonumber &=& C^{-1}_\phi \int_{\mathbb{K}}\int_{\mathbb{K}} (K_\psi h)(a,b) (K_\chi g)(a,b) \phi_{a,b}(x) |a|^{-2}dadb
\\
&=&  \nonumber  K^{-1}_\phi \left[ (K_\psi h)(a,b)  (K_\chi g)(a,b)  \right] (x)
\end{eqnarray}
so that
\begin{eqnarray}
K_\phi[h\# g](b,a)   =   (K_\psi h)(a,b)  (K_\chi g)(a,b) (x)
\end{eqnarray}
\section*{Acknowledgment}
The work is supported by U.G.C start-up grant No:F.30-12(2014)/(BSR).
\thebibliography{00}
\bibitem{dau} Daubechies, Ten Lectures on Wavelets, in: CBMS/NSF Ser. Appl. Math., vol. 61, SIAM, 1992.
\bibitem{dab}  L. Debnath, Wavelet Transforms and Their Applications, Birkhauser, Boston, 2002.
\bibitem{chui } C.K. Chui, An Introduction to Wavelets, Academic Press, 1992.
\bibitem{hol}  M. Holschneider, Wavelet analysis over Abelian groups, Appl. Comput. Harmon. Anal. 2 (1995) 52-60.
\bibitem{Pathak} R. S. Pathak and Ashish Pathak . On convolution for wavelets transform ;international journal of wavelets, multiresolution and information processing , (2008), 6(5): 739-747.
\bibitem{ram}  D. Ramakrishnan and R. J. Valenza, Fourier Analysis on Number Fields, Graduate
Texts in Mathematics 186, Springer-Verlag, New York, 1999.
\bibitem{jia} H. Jiang, D. Li and N. Jin, Multiresolution analysis on local fields, J. Math. Anal.
Appl. 294 (2004) 523- 532.
\bibitem{tai}  M.H. Taibleson, Fourier Analysis on Local Fields, Princeton Univ. Press, 1975.

\end{document}